\documentclass[11pt]{article}
\date{}

\usepackage[title]{appendix}
\usepackage{xcolor}
\usepackage[margin=1in]{geometry}                
\usepackage{graphicx}
\usepackage{subcaption}
\usepackage{pdflscape}
\usepackage{amssymb}
\usepackage[normalem]{ulem}
\usepackage{hyperref}
\usepackage{enumitem}
\usepackage{mathrsfs}
\usepackage{epstopdf}
\usepackage{rotating}
\usepackage{longtable} 
\usepackage{adjustbox}
\usepackage{float}

\usepackage{color}
\usepackage{bbm, dsfont}
\usepackage{pst-node}
\usepackage{tikz-cd}
\usepackage{amsfonts} 
\usepackage{geometry}
\usepackage{amsthm}
\usepackage{amsmath}
\usepackage{titlesec}
\usepackage{amssymb}
\usepackage{enumitem}
\usepackage{float}
\usepackage [english]{babel}
\usepackage [autostyle, english = american]{csquotes}
\usepackage{algorithm}
\usepackage[noend]{algpseudocode} 
\makeatletter
\def\BState{\State\hskip-\ALG@thistlm}
\makeatother
\usepackage{hyperref}
\usepackage[normalem]{ulem}
\usepackage{mathrsfs}
\usepackage[italicdiff]{physics}

\newlist{casess}{enumerate}{1}
\setlist[casess]{label=     \textbf{Case} \arabic*:}
\usepackage{mathtools}

\makeatletter
\newcommand*{\rom}[1]{\expandafter\@slowromancap\romannumeral #1@}
\makeatother

\usepackage{etoolbox}

\makeatletter
\patchcmd{\ttlh@hang}{\parindent\z@}{\parindent\z@\leavevmode}{}{}
\patchcmd{\ttlh@hang}{\noindent}{}{}{}
\makeatother

\usepackage{listings}
\usepackage{color} 
\definecolor{mygreen}{RGB}{28,172,0} 
\definecolor{mylilas}{RGB}{170,55,241}

\newlist{Assumptions}{enumerate}{1}
\setlist[Assumptions]{label=     \textbf{Assumption} \arabic*:}

\makeatletter

\newsavebox{\@brx}
\newcommand{\llangle}[1][]{\savebox{\@brx}{\(\m@th{#1\langle}\)}%
  \mathopen{\copy\@brx\kern-0.5\wd\@brx\usebox{\@brx}}}
\newcommand{\rrangle}[1][]{\savebox{\@brx}{\(\m@th{#1\rangle}\)}%
  \mathclose{\copy\@brx\kern-0.5\wd\@brx\usebox{\@brx}}}
\makeatother

\usepackage{lipsum} 
\usepackage{titlesec}
\titleformat{\subsection}[runin]
       {\normalfont\bfseries}
       {\thesubsection}
       {0.5em}
       {}
       [.]

 \newtheorem{thm}{Theorem}[section]
 \newtheorem{cor}[thm]{Corollary}
 
 \newtheorem{lem}[thm]{Lemma}
 
 \theoremstyle{definition}
 \newtheorem{defn}[thm]{Definition}
 \theoremstyle{remark}

 \numberwithin{equation}{section}

\numberwithin{equation}{section}

\def\N{\mathbb{N}}

\def\Z{\mathbb{Z}}

\def\R{\mathbb{R}}
\def\Z{\mathbb Z}

\DeclarePairedDelimiterX{\inp}[2]{\langle}{\rangle}{#1, #2}

\makeatletter
\newcommand*\bigcdot{\mathpalette\bigcdot@{.5}}
\newcommand*\bigcdot@[2]{\mathbin{\vcenter{\hbox{\scalebox{#2}{$\m@th#1\bullet$}}}}}
\makeatother

\def\CC{\mathbb C}

\def\<{\langle}
\def\>{\rangle}

\numberwithin{equation}{section}

\usepackage[backend=biber,maxnames=10]{biblatex}
\begin{document}

\title{Rapid decay and functional calculus in ${\rm C}^*$-probability spaces}

\author{Felipe Flores
\footnote{
\textbf{2020 Mathematics Subject Classification:} Primary 46K05, Secondary 46H30, 46L80.
\newline
\textbf{Key Words:} Rapid decay, smooth functional calculus, ${\rm C}^*$-probability space, filtration, differential subalgebra, unbounded derivation. }
}

\maketitle

\begin{abstract}\setlength{\parindent}{0pt}\setlength{\parskip}{1ex}\noindent
 Let $(A,\rho)$ be a tracial ${\rm C}^*$-probability space with the rapid decay property relative to a filtration $L$. We show that the associated Sobolev algebra $H_L^{\infty}(A,\rho)$ is closed under the smooth functional calculus of $A$. Some consequences include norm estimates, the ideal separation property, and the fact that the inclusion $H_L^{\infty}(A,\rho)\subset A$ induces an isomorphism in $K$-theory.
\end{abstract}


\section{Introduction}

The rapid decay property, as introduced by Jolissaint \cite{Jo89,Jo90}, is a property of length functions on countable groups that provides a polynomial-based estimate for the operator norm of convolvers acting on the $\ell^2$-space of the group. More concretely, a countable group $G$ has the rapid decay property with respect to the length function $l:G\to[0,\infty)$ if there exists a polynomial $P$ such that 
$$
\|f*h\|_2\leq P(r)\|f\|_2\|h\|_2
$$
holds for every $h\in \CC G$, and every $f\in\CC G$ that is supported in the ball of radius $r$ with respect to $l$. 

Large classes of interesting groups have the rapid decay property with respect to a natural length function. For example, groups of polynomial growth, hyperbolic groups, Coxeter groups, mapping class groups, and many more (see \cite{Ch17}).

Such a property has classical applications to approximation properties for group ${\rm C}^*$-algebras \cite{Ha78}, the study of the Baum-Connes conjecture \cite{La02}, and it is closely related to Khintchine-type inequalities \cite{RiXu06}. Recently, it has been used to compute the stable rank in some group ${\rm C}^*$-algebras \cite{DydlH99,GeOs20}, to produce examples of selfless ${\rm C}^*$-algebras \cite{AGKEP25,KEPT25,HKER25}, and simple ${\rm C}^*$-algebras associated with quantum groups \cite{VaVe07} and $q$-Gaussian variables \cite{AmJeWa26}. Moreover, the rapid decay property is also featured in many of the recent developments in random matrix theory (\cite[Section 7.1]{bordenave2024normmatrixvaluedpolynomialsrandom}, \cite[Appendix B]{NewApproachII}, \cite[Sections 5 and 6]{MdLS},\cite[Section 2.6]{vanhandel2026strongconvergencephenomenon}).

One of the most interesting features of the rapid decay property is that it allows us to properly define a subalgebra of 'rapidly decaying functions,' typically denoted by $H^\infty_L(G)$. Indeed, the idea is as follows: Consider the ${\rm C}^*$-algebra $C(\mathbb T^k)$ of continuous functions on the $k$-torus. Note, however, that the entirety of $C(\mathbb T^k)$ becomes impractical for many purposes, such as encoding the geometric properties of $\mathbb T^k$, or defining derivations. A much more educated choice is to consider the Fr\'echet subalgebra of infinitely differentiable functions $C^\infty(\mathbb T^k)$. However, note that the Fourier-Gelfand transform isometrically maps $C(\mathbb T^k)$ into the group ${\rm C}^*$-algebra $C^*(\Z^k)$, while $C^\infty(\mathbb T^k)$ is sent to the space of sequences that decay rapidly, which is given by
$$
H^\infty(\Z^k)=\{a:\Z^k\to\CC: \sum_{n\in\Z^k} |a(n)|^{2}(1+|n|)^{2s}<+\infty, \text{ for all }s\geq0 \}.
$$
This identification extends very naturally to a definition in the setting of noncommutative groups equipped with length functions. Indeed, in \cite[Definition 1.1.6]{Jo90}, Jolissaint introduced the Sobolev algebra $H^\infty_L(G)$, explicitly given by 
$$
H^\infty_L(G)=\{a:G\to\CC: \sum_{g\in G} |a(g)|^{2}(1+L(g))^{2s}<+\infty, \text{ for all }s\geq0 \}.
$$
Furthermore, the space $H^\infty_L(G)$ is contained in $C^*_r(G)$ precisely when $G$ has rapid decay with respect to the length function $L$ \cite[Definition 1.2.1, Proposition 1.2.6]{Jo90}, so one could argue that the study of Sobolev subalgebras is equivalent to the study of rapid decay itself.

Our purpose in this article is to study the spectral properties of the Sobolev algebra associated with a ${\rm C}^*$-probability space with the rapid decay property, as introduced in \cite{HKER25}. More precisely, the idea is to prove that such an algebra is closed under the smooth functional calculus of the ambient ${\rm C}^*$-algebra and to derive some interesting consequences of this fact. In particular, we wish to provide norm estimates applicable to random matrix theory and prove that the inclusion $H_L^{\infty}(A,\rho)\subset A$ induces an isomorphism in $K$-theory. In the case of countable groups, this last observation was one of the crucial ingredients that allowed Lafforgue to famously settle the Baum-Connes conjecture for hyperbolic groups \cite{La02}. 

Other works that establish results about permanence under the smooth functional calculus are \cite{MB79,BaElJo84,BC91,KiSh94,Fl25a,Fl25b}.

Our article also fits within a larger program that has attracted the attention of many researchers over the last few years. This program is based on the idea of extending the rapid decay property, or at least adapting its methods to larger classes of algebras. Examples of such new settings include ${\rm C}^*$-algebras associated with \'etale groupoids \cite{Ho17}, Fell bundles \cite{BuKa26}, $L^p$-operator algebras \cite{LiYu17,AOP25}, and abstract ${\rm C}^*$-probability spaces \cite{HKER25}. 

Indeed, following Hayes-Kunnawalkam Elayavalli-Robert \cite{HKER25}, we will say that an abstract ${\rm C}^*$-probability space $(A,\rho)$ has the rapid decay property with respect to a filtration $L=(L_{n})_{n=0}^{\infty}$ if there are constants $C>0$ and $\alpha>0$ such that 
    \[
    \|a\|\leq C(n+1)^{\alpha}\|a\|_{2}
    \]
    for all $a\in L_{n}$. Such a definition allows for the definition of a natural Sobolev algebra, here denoted $H_L^{\infty}(A,\rho)$. As a set, it has the following form: 

    $$
    H_L^{\infty}(A,\rho)=\big\{a\in A:\sum_{k=0}^{\infty}(k+1)^{2\alpha}\|p_{k}(\xi)\|_{2}^{2}<+\infty, \text{ for all }\alpha> 0\big\}.
    $$
We refer to the preliminaries section for a fully rigorous definition.

Let us now state our main theorem.

\begin{thm}[see Theorem \ref{actual}]\label{mainthm}
        Let $(A,\rho)$ be a tracial ${\rm C}^*$-probability space, and let
$L=(L_n)_{n=0}^\infty$ be a filtration of $A$ with the rapid decay property.
Then $H_L^\infty(A,\rho)$ is closed under the smooth functional calculus of
$A$.

More precisely, fix rapid decay constants $C,\alpha>0$.  For every
$n\in\mathbb N$, there is a constant $K_n>0$ such that, whenever
$a=a^*\in H_L^\infty(A,\rho)$ and $f\in C_c^\infty(\mathbb R)$,
\begin{equation*}
 f(a)=\frac{1}{2\pi}\int_{\mathbb R}\widehat f(t)e^{ita}\,dt
\end{equation*}
converges in every subspace $H_L^n(A,\rho)$, and we have the norm estimate
\begin{equation*}
 \|f(a)\|_{H_L^n}\leq K_n\bigl(1+\|a\|_{H_L^{\alpha+n+1}}\bigr)^n \int_{\mathbb R}|\widehat f(t)|(1+|t|)^n\,dt.
\end{equation*}
\end{thm}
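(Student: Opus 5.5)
The plan is to reduce everything to a polynomial-in-$t$ estimate for the unitary group $e^{ita}$ in the Sobolev norms:
$$\|e^{ita}\|_{H_L^n}\le K_n\bigl(1+\|a\|_{H_L^{\alpha+n+1}}\bigr)^n(1+|t|)^n .$$
Once this is available, the Fourier inversion integral converges absolutely in each Banach space $H_L^n(A,\rho)$, because $\widehat f$ is Schwartz. Its $H_L^n$-limit coincides with the norm limit $f(a)$ in $A$, since $H_L^n\hookrightarrow A$ continuously by rapid decay: an element of $H^{s}_L$ with $s>\alpha+1/2$ sums its $\|p_k(\cdot)\|_2 C(k+1)^\alpha$ pieces in operator norm. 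The stated estimate then follows by moving the norm inside the integral.

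To get the unitary estimate I would realize $H_L^s$ through the self-adjoint "length" operator $D=\sum_k (k+1)p_k$ on $L^2(A,\rho)$, so that $\|x\|_{H^s_L}=\|D^s\hat x\|_2$. The key algebraic input is a Leibniz-type inequality coming from the filtration property $L_mL_n\subset L_{m+n}$. Write $x=\sum_j p_j x$ and $y=\sum_k p_k y$. Then $p_j(x)p_k(y)\in L_{j+k}$, and $(1+j+k)^s\le 2^s\bigl((1+j)^s+(1+k)^s\bigr)$. Combining this with rapid decay, $\|p_j(x)\|\le C(j+1)^\alpha\|p_j x\|_2$, and Cauchy–Schwarz gives
$$\|xy\|_{H^s}\le K\bigl(\|x\|_{H^{s+\alpha+1}}\|y\|_{2}+\|x\|\,\|y\|_{H^s}\bigr),$$
and symmetrically with the roles of $x$ and $y$ exchanged. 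This is the standard Lafforgue/Jolissaint-type estimate. Note also that right multiplication by a unitary is isometric on $L^2$ by traciality. Both facts should be stated and proved in the preliminaries of the section.

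Next I differentiate in $t$ using Duhamel's formula. For integer $n$ I would induct on $n$ using the identity
$$[D,e^{ita}]=i\int_0^t e^{isa}[D,a]e^{i(t-s)a}\,ds.$$
More concretely, I would work on the vector $\widehat{e^{ita}}=e^{ita}\hat 1$ and estimate the quantity $D^n e^{ita}\hat1$ directly, since commutators with $D$ are delicate for an unbounded operator. Instead I would use the multiplication inequality of the previous step with $x=a$ and $y=e^{isa}$ via the ODE $\frac{d}{dt}e^{ita}=iae^{ita}$. Set $\phi_n(t)=\|e^{ita}\|_{H^n}$. Integrating the ODE and applying the Leibniz bound gives
$$\phi_n(t)\le \phi_n(0)+\int_0^{|t|}\Bigl(K\|a\|_{H^{n+\alpha+1}}\,\|e^{isa}\|_2+K\|a\|\,\phi_n(s)\Bigr)ds.$$
The term $K\|a\|\phi_n(s)$ would produce exponential growth under Gronwall, and this is the main obstacle. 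To avoid it I would use a sharper, commutator-type Leibniz estimate. The point is that the top-order part $D^n$ acting only on $y$ should be cancelled by unitarity: $e^{ita}$ is an isometry, so one must estimate $[D^n,\lambda(a)]$ rather than $D^n\lambda(a)$.

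Concretely, I would bound the commutator by
$$\|[D^n,\lambda(a)]\xi\|_2\le K_n\|a\|_{H^{n+\alpha+1}}\sum_{m<n}\|D^m\xi\|_2,$$
using $(1+j+k)^n-(1+k)^n\lesssim \sum_{m<n}(1+j)^{n-m}(1+k)^m$. Since $\lambda(a)$ is self-adjoint and $e^{ita}$ is unitary, the derivative of $\|D^ne^{ita}\hat1\|_2^2$ only involves this commutator. This yields
$$\phi_n(t)\le 1+K_n\|a\|_{H^{n+\alpha+1}}\int_0^{|t|}\sum_{m<n}\phi_m(s)\,ds,$$
and induction from $\phi_0=1$ gives the polynomial bound $(1+\|a\|)^n(1+|t|)^n$. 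Justifying the differentiation requires truncating with the finite-rank projections $\sum_{k\le N}p_k$ and letting $N\to\infty$, which I expect to be routine. The essential difficulty is the commutator estimate, with its loss of $\alpha+1$ derivatives.
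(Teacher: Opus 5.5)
Your argument is correct in substance, but it follows a different route from the paper's. The paper never estimates $\|e^{ita}\|_{H^n_L}$ in $L^2$ directly. Instead it:
\begin{itemize}
\item builds the ${\rm C}^*$-algebra $\mathcal B_L$, on which $\partial$ (essentially $i[D_L,\cdot\,]$) is a closed $*$-derivation;
\item proves $H^\infty_L(A,\rho)=A\cap\bigcap_n{\rm dom}(\partial^n)$, with $q_m(a)\le c_0Ce\|a\|_{H_L^{m+\alpha+1}}$ and $\|a\|_{H_L^m}\le 2^m m!\,q_m(a)$;
\item bounds the operator norms $\|\partial^r(e^{ita})\|$ by induction on $r$, using the Bratteli--Elliott--Jorgensen formula $\partial(e^{ita})=i\int_0^t e^{isa}\partial(a)e^{i(t-s)a}\,ds$ and its iterates.
\end{itemize}
You instead run an energy estimate for $\psi(t)=e^{ita}\hat 1$. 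Because $\langle\lambda(a)D^n\psi,D^n\psi\rangle$ is real, $\frac{d}{dt}\|D^n\psi\|_2^2=-2\,{\rm Im}\langle[D^n,\lambda(a)]\psi,D^n\psi\rangle$. Only the lower-order commutator survives, so the induction from $\phi_0\equiv 1$ gives the polynomial bound directly in $H^n_L$. After integrating against $|\widehat f|$, this is exactly the stated estimate. Your route avoids the derivation on $\mathcal B_L$, the BEJ formula, and the constants converting between $q_n$ and Sobolev norms. The paper's route gets all domain questions for free: ${\rm dom}(\partial^n)$ is a Banach $*$-algebra, so it contains $e^{ita}$ via the power series. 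It also identifies $H^\infty_L(A,\rho)$ with the smooth domain of a closed $*$-derivation, which is the classical setting of that calculus.

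Two points in your sketch need repair.

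\textbf{The commutator exponent.} The loss of exactly $n+\alpha+1$ does not follow from the pointwise bound on $(1+j+k)^n-(1+k)^n$ summed naively over the band $|m-k|\le j$. That costs an extra factor $2j+1$ and only yields $\|a\|_{H_L^{n+\alpha+2}}$. Instead, for $b\in\overline{L_j}^{\|\cdot\|}$, write $[D^n,\lambda(b)]=\sum_{p=1}^n\binom{n}{p}\,{\rm ad}_D^p(\lambda(b))D^{n-p}$. Then use the Bernstein bound $\|{\rm ad}_D^p(\lambda(b))\|\le j^p\|b\|$, which is precisely the argument of Lemma \ref{iterated}. Rapid decay and Cauchy--Schwarz then give $\|a\|_{H_L^{n+\alpha+1}}$.

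\textbf{The truncation.} $q_N=\sum_{k\le N}p_k$ need not have finite rank, since $L_N$ can be infinite-dimensional. More importantly, cutting off $D^n$ to $D^nq_N$ destroys the cancellation. $[D^nq_N,\lambda(a)]$ carries weights $(m+1)^n\approx N^n$ on the blocks $p_m\lambda(a)p_k$ with $m\le N<k$, and controlling these already presupposes $\psi(t)\in{\rm dom}(D^n)$. Truncate $a$ instead:
\begin{itemize}
\item The element $b_N\in\overline{L_N}^{\|\cdot\|}$ with $\widehat{b_N}=q_N(\hat a)$ is self-adjoint.
\item $\|b_N^m\|_{H_L^n}\le(mN+1)^n\|b_N\|^m$, so $t\mapsto e^{itb_N}\hat 1$ is analytic into every $H^n_L$ and your energy computation is rigorous for $b_N$.
\item $\|b_N\|_{H^s_L}\le\|a\|_{H^s_L}$, so the bound is uniform in $N$.
\item $b_N\to a$ in norm, and the Fatou argument of Lemma \ref{closedd} (lower semicontinuity of $\|\cdot\|_{H^n_L}$ under $L^2$-convergence) transfers the bound to $a$.
\end{itemize}

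Finally, drop the first Leibniz inequality and the Gronwall attempt. They play no role in the final argument. Moreover, the version with $\|x\|$ in the second term is not what your Cauchy--Schwarz sketch actually produces.
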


An immediate consequence of such a robust calculus is the fact that $H_L^{\infty}(A,\rho)$ can detect and `reconstruct' the ideals of $A$ (see \cite{KiSh94}). Another immediate consequence is our promised result stating that $H_L^{\infty}(A,\rho)$ and $A$ have the same $K$-theory groups (see \cite{Ph91,Sc92}). We record these results in the following corollary.

\begin{cor}
    Let $(A,\rho)$ be a tracial ${\rm C}^*$-probability space with the rapid decay property relative to a filtration $L$. Then the following are true:
    \begin{enumerate}
        \item[(i)] Let $I\unlhd A$ be a closed two-sided ideal. Then $\overline{I\cap H_L^{\infty}(A,\rho) }^{\|\cdot\|}=I$.  
        \item[(ii)] The inclusion $H_L^{\infty}(A,\rho)\subset A$ induces an isomorphism in $K$-theory.
    \end{enumerate}
\end{cor}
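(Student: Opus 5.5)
Both parts will be derived from Theorem \ref{mainthm}, via the standard machinery for dense subalgebras that are stable under smooth functional calculus. The first step is to upgrade Theorem \ref{mainthm} to \emph{spectral invariance}: $H_L^\infty(A,\rho)$ is inverse closed in $A$, and the same holds for its matrix algebras.

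For inverse closedness, let $b\in H_L^\infty(A,\rho)$ be invertible in $A$. Then $a=b^*b$ is selfadjoint, lies in $H_L^\infty(A,\rho)$ (I will use that the Sobolev space is a $*$-subalgebra, which should be among the preliminaries), and has spectrum contained in $[\delta,\|a\|]$ for some $\delta>0$. Choose $f\in C_c^\infty(\mathbb R)$ with $f(x)=1/x$ on a neighbourhood of $[\delta,\|a\|]$. Then $f(a)=a^{-1}$, and Theorem \ref{mainthm} gives $a^{-1}\in H_L^\infty(A,\rho)$. Hence $b^{-1}=a^{-1}b^*\in H_L^\infty(A,\rho)$.

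Density in $A$ should follow from the definitions: the union $\bigcup_n L_n$ lies in $H_L^\infty(A,\rho)$ and is norm dense, presumably as part of the definition of a filtration. The Sobolev topology is Fréchet, given by the norms $\|\cdot\|_{H_L^n}$, and it dominates the C$^*$-norm by rapid decay.

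Part (ii) then follows from the density theorem of Schweitzer \cite{Sc92} (see also Phillips \cite{Ph91}). For that theorem to apply, I must check that $H_L^\infty(A,\rho)$ is a dense Fréchet subalgebra of $A$ that is closed under holomorphic functional calculus. This is plausible because holomorphic calculus can be reduced to the inverse closedness just proved. Indeed, once $b-\lambda$ has its inverse in the subalgebra for every $\lambda$ off the spectrum, the resolvent $\lambda\mapsto(b-\lambda)^{-1}$ is continuous into the Fréchet topology, because inversion is continuous on such an algebra. The Cauchy integral therefore converges in $H_L^\infty(A,\rho)$.

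To apply \cite{Sc92} I also need the matrix algebras $M_k(H_L^\infty(A,\rho))$ to be inverse closed in $M_k(A)$. My first attempt is to observe that $(M_k(A),\rho\otimes\mathrm{tr})$ is again a tracial C$^*$-probability space with filtration $M_k(L_n)$. It has rapid decay with the constant $C$ replaced by $Ck$, since $\|[a_{ij}]\|\le\sum_{ij}\|a_{ij}\|$. Its Sobolev algebra is $M_k(H_L^\infty(A,\rho))$, so Theorem \ref{mainthm} applies to it directly. Schweitzer's result then says that the inclusion induces isomorphisms on $K_0$ and $K_1$.

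For part (i), I would follow the argument of \cite{KiSh94}. Let $I$ be a closed ideal and $x\in I$. By density, pick $b_m\in H_L^\infty(A,\rho)$ with $b_m\to x$ in norm. Because $b_m$ need not lie in $I$, I would pass to positive elements. It suffices to approximate positive elements $y\in I$, since $I$ is spanned by its positive part, and approximating $y^{1/2}$ is enough anyway. Take $a_m=\mathrm{Re}(b_m)\in H_L^\infty(A,\rho)$ with $a_m\to y$. Fix $\varepsilon>0$ and a smooth compactly supported $f_\varepsilon$ that vanishes on $[-\varepsilon/2,\varepsilon/2]$ and satisfies $|f_\varepsilon(t)-t|\le\varepsilon$ on a large interval.

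The step I expect to be the main obstacle is showing that $f_\varepsilon(a_m)$ actually lies in $I$. To handle it, I would write $f_\varepsilon=g_\varepsilon\cdot h$ with $g_\varepsilon$ smooth and $h(t)=t$ on the support. Working in the quotient $\pi:A\to A/I$, we have $\pi(a_m)\to0$. For large $m$ the spectrum of $\pi(a_m)$ lies in $(-\varepsilon/2,\varepsilon/2)$, so $\pi(f_\varepsilon(a_m))=f_\varepsilon(\pi(a_m))=0$. Hence $f_\varepsilon(a_m)\in I\cap H_L^\infty(A,\rho)$, by Theorem \ref{mainthm}. Finally, $\|f_\varepsilon(a_m)-y\|\le\varepsilon+\|a_m-y\|$. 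This gives (i) for positive elements, and linearity gives it in general.
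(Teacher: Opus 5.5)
Your proposal is correct and follows the same route as the paper, which deduces (i) from the Kissin--Shulman argument and (ii) from the Phillips--Schweitzer density theorem applied to the smooth (hence inverse-closed, hence holomorphic) calculus of Theorem~\ref{mainthm}; you have simply written out those cited arguments. The only slip is in your optional matrix step: $(M_k(L_n))_n$ is not literally a filtration, because $M_k(L_0)=M_k(\mathbb C)\neq\mathbb C1$ (re-indexing as $L_0'=\mathbb C1$, $L_n'=M_k(L_{n-1})$ fixes this). The step is unnecessary anyway, since Schweitzer's theorem already transfers inverse closedness from the Fr\'echet algebra $H_L^\infty(A,\rho)$ to $M_k(H_L^\infty(A,\rho))\subset M_k(A)$.
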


We now briefly comment on the proof of Theorem \ref{mainthm}. The idea is to define a densely-defined, unbounded $^*$-derivation $\partial:{\rm dom}(\partial)\subset \mathcal B_L\to \mathcal B_L$ and use it to identify the Fr\'echet subalgebra $H_L^{\infty}(A,\rho)$ with the set of vectors in $A$ that lie in the domain of $\partial^n$ for all $n\in\N$. Such a strategy is motivated by the work of Ji \cite{Ji92}, who used this approach in the group setting to demonstrate that, under the rapid decay property, Sobolev subalgebras are closed under the holomorphic functional calculus of the reduced group ${\rm C}^*$-algebra. Similar results were obtained in \cite{LiYu17,AOP25,ChWa26}.

After identifying $H_L^{\infty}(A,\rho)$, we follow the work of Bratteli-Elliott-Jorgensen, who proved that, for any unbounded closed $*$-derivation $\partial$ on $A$, ${\rm dom}(\partial^n)$ is closed under the smooth functional calculus of $A$ and provided an integral formula for said calculus. We iterate their formula and use it to obtain a polynomial bound on the growth of all self-adjoint elements in ${\rm dom}(\partial^n)$. This naturally leads to a smooth functional calculus based on the Fourier inversion formula, as our main theorem demonstrates. This type of functional calculus is known as the Dixmier-Baillet construction \cite{Di60,MB79}. Other functional calculi \`a la Dixmier-Baillet have been constructed in \cite{KiSh94,Fl25a,Fl25b}.

\section{Preliminaries}

For a faithful $C^{*}$-probability space $(A,\rho)$, we denote the GNS Hilbert space by $L^2(A,\rho)$. Recall that every element $a\in A$ can be naturally viewed as a vector $\hat a\in L^2(A,\rho)$, and that the action of $A$ by left multiplication on itself extends to all of $L^2(A,\rho)$. 

\begin{defn}\label{defn:RDP}
Let $(A,\rho)$ be a unital $C^{*}$-probability space. A \emph{filtration} of $A$ is an increasing sequence $(L_{n})_{n=0}^{\infty}$ of linear subspaces of $A$ such that:
\begin{itemize}
    \item $L_{0}=\mathbb C 1$;
    \item $L_{n}$ is stable under the adjoint operation $*$;
    \item $L_{n}L_{k}\subseteq L_{n+k}$ for all $n,k$;
    \item $\overline{\bigcup_{k}L_{k}}^{\|\cdot\|}=A$.
\end{itemize}
\end{defn}

One key example of interest is the following. Suppose that $x=(x_{1},\cdots,x_{r})\in A^{r}$ and that $A=C^{*}(x,1)$. In this case, we obtain a filtration on $A$ by setting 
\[
L_{n}=\{P(x):P\in \mathbb C^{*}\langle{T_{1},\cdots,T_{r}}\rangle \textnormal{ and } \deg(P)\leq n\}
\]
for $n\in \N$. We call $(L_{n})_{n=0}^{\infty}$ the degree-filtration obtained from the generating tuple $x$.

\begin{defn}[\cite{HKER25}]
Given a ${\rm C}^*$-probability space $(A,\rho)$ with filtration $(L_{n})_{n=0}^{\infty}$, we say that $(L_{n})_{n=0}^{\infty}$ has the \emph{rapid decay property}, or that $(A,\rho)$  has rapid decay relative to $(L_{n})_{n=0}^{\infty}$, if 
there are constants $C>0$ and $\alpha>0$ such that 
    \[
    \|a\|\leq C(n+1)^{\alpha}\|\hat a\|_{2}
    \]
    for all $a\in L_{n}$.
  When $(L_{n})_{n=0}^{\infty}$ is the degree-filtration of $C^{*}(x,1)$ coming from a tuple $x\in A^r$, we say that the tuple $x$ has rapid decay.  
\end{defn}

For faithful $C^{*}$-probability spaces, we can characterize the rapid decay property via locally convex spaces as follows.

\begin{defn}
Let $(A,\rho)$ be a unital $C^{*}$-probability space, with $\rho$ faithful. By faithfulness, we may identify $A\subseteq L^{2}(A,\rho)$. 

Let $(L_{n})_{n=0}^{\infty}$ be a filtration on $A$. For $k\in \N$, let $q_{k}\in \mathbb B(L^2(A,\rho))$ be the projection onto $\overline{L_{k}}^{\|\cdot\|_2}$, and set $p_{k}=q_{k}-q_{k-1}$, with $q_{-1}=0$. For $\xi\in L^{2}(A,\rho)$ and $\alpha>0$, we define
\begin{align*}
\|\xi\|_{H^{\alpha}_L} &=\left(\sum_{k=0}^{\infty}(k+1)^{2\alpha}\|p_{k}(\xi)\|_{2}^{2}\right)^{1/2}.
\end{align*}

Define vector subspaces
\begin{align*}
H_L^{\alpha}(A,\rho) &=\{\xi\in L^{2}(A,\rho):\|\xi\|_{H^{\alpha}_L}<+\infty\},
\end{align*}
equipped with the norm $\|\cdot\|_{H^{\alpha}_L}$. It can be shown that $H_L^{\alpha}(A,\rho)$ is complete.
We let 
\begin{align*}
H_L^{\infty}(A,\rho)&=\bigcap_{\alpha>0}H_L^{\alpha}(A,\rho).
\end{align*}
\end{defn}

We give $H_L^{\infty}(A,\rho)$ a locally convex structure by equipping it with the family of norms $(\|\cdot\|_{H^{\alpha}_L})_{\alpha>0}$. Restricting these families of seminorms to $\alpha\in \N$ defines the same topology; thus, this locally convex topology is completely metrizable. 

It was established in \cite[Proposition 3.3]{HKER25} that any unital ${\rm C}^*$-probability space with the rapid decay property must be faithful. Hence, restricting our scope to faithful ${\rm C}^*$-probability spaces will produce no loss in generality.

\section{Main results}

From now on, we fix a faithful tracial ${\rm C}^*$-probability space $(A,\rho)$ and a filtration $L=(L_{n})_{n=0}^{\infty}$ of $A$. We use the filtration to define the following unbounded operator:
$$
D_L:{\rm dom}(D_L)\subset L^{2}(A,\rho)\to L^{2}(A,\rho),\quad \text{ defined by }D_L(\xi)=\sum_{k\in\N}(1+k)p_k(\xi).
$$
Here, the domain of $D_L$ is obviously given by
$$
{\rm dom}(D_L)=\{\xi\in  L^{2}(A,\rho): D_L(\xi)\in L^{2}(A,\rho)\},
$$
and it contains $\bigcup_{n\in\N} L_n$, so $D_L$ is densely defined.
\begin{lem}\label{closedd}
    Let $(A,\rho)$ be a tracial ${\rm C}^*$-probability space, and let $L=(L_{n})_{n=0}^{\infty}$ be a filtration of $A$. Then, the associated operator $D_L$ is closed and self-adjoint.
\end{lem}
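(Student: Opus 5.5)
The plan is to show that the projections $p_k$ form a complete family of mutually orthogonal projections with $\sum_k p_k = 1$ strongly on $L^2(A,\rho)$. Once that is in place, $D_L$ is the diagonal multiplication operator $\sum_k (1+k)p_k$ with real eigenvalues. Self-adjointness, and hence closedness, then follow from the spectral theorem, or more concretely from the standard theory of multiplication operators on the direct sum $L^2(A,\rho)=\bigoplus_k p_k L^2(A,\rho)$.

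\emph{Step 1: orthogonality and completeness.} Since $L_k\subseteq L_{k+1}$, the closed subspaces $\overline{L_k}^{\|\cdot\|_2}$ increase, so $q_k\le q_{k+1}$. Hence each $p_k=q_k-q_{k-1}$ is a projection, and $p_kp_j=0$ for $k\neq j$. Moreover $\sum_{k\le n}p_k=q_n$ by telescoping. To see that $q_n\to 1$ strongly, note that the range of $\sup_n q_n$ is the $\|\cdot\|_2$-closure of $\bigcup_k L_k$. Since $\|\hat a\|_2=\rho(a^*a)^{1/2}\le\|a\|$ and $\bigcup_k L_k$ is norm-dense in $A$, this closure contains $\hat A$, which is dense in $L^2(A,\rho)$ by construction of the GNS space. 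So every $\xi$ decomposes as $\xi=\sum_k p_k\xi$ with $\|\xi\|_2^2=\sum_k\|p_k\xi\|_2^2$.

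\emph{Step 2: identify $D_L$ as a diagonal operator.} Via the unitary $\xi\mapsto(p_k\xi)_k$ onto $\bigoplus_k p_kL^2(A,\rho)$, the operator $D_L$ becomes multiplication by the real sequence $(1+k)$. Its domain is $\{\xi:\sum_k(1+k)^2\|p_k\xi\|_2^2<\infty\}$; I would make explicit that this is the intended meaning of ``$D_L(\xi)\in L^2$''. This domain contains each $p_kL^2(A,\rho)$ and is therefore dense.

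\emph{Step 3: self-adjointness.} Symmetry is immediate: for $\xi,\eta$ in the domain, $\langle D_L\xi,\eta\rangle=\sum_k(1+k)\langle p_k\xi,p_k\eta\rangle=\langle\xi,D_L\eta\rangle$. For the reverse inclusion $\operatorname{dom}(D_L^*)\subseteq\operatorname{dom}(D_L)$, take $\eta\in\operatorname{dom}(D_L^*)$ and test against $\xi\in p_kL^2$. This gives $p_kD_L^*\eta=(1+k)p_k\eta$. Summing the squared norms over $k$ yields $\sum_k(1+k)^2\|p_k\eta\|_2^2=\|D_L^*\eta\|_2^2<\infty$, so $\eta\in\operatorname{dom}(D_L)$ and $D_L^*\eta=D_L\eta$. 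Thus $D_L=D_L^*$, and any adjoint is closed, so $D_L$ is closed.

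The only step requiring care is Step 1's completeness, namely that the $p_k$ sum to the identity. This is where the density axiom of the filtration and the GNS construction enter. Note that neither the trace property nor rapid decay is needed for this lemma.
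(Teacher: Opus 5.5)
Your proposal is correct. The self-adjointness argument is the same as the paper's: test $D_L^*\eta$ against vectors in $p_kL^2(A,\rho)$ to get $p_kD_L^*\eta=(1+k)p_k\eta$, then sum the squared norms.

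The difference lies in how closedness is obtained. The paper proves closedness first, as a separate step. It takes $\xi_n\to\xi$ with $D_L\xi_n\to\eta$ and applies Fatou's lemma twice: once to get $\sum_k(1+k)^2\|p_k\xi\|_2^2<\infty$, and once to show $\eta=D_L\xi$. You instead note that once $D_L=D_L^*$ is known, closedness is automatic, because the adjoint of a densely defined operator is always closed. Your route is shorter and makes the Fatou argument redundant. The paper's direct argument avoids adjoints but gains nothing further here.

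You also make explicit a fact the paper uses only tacitly: $\sum_k p_k=1$ strongly on $L^2(A,\rho)$. You derive it from the norm-density of $\bigcup_k L_k$, the estimate $\|\hat a\|_2\le\|a\|$, and the density of $\hat A$ in the GNS space. The paper relies on this when it asserts that $D_L$ is densely defined. It relies on it again when it invokes Parseval's identity, for instance in $\|\eta-D_L\xi\|_2^2=\sum_k\|p_k\eta-p_kD_L\xi\|_2^2$ and in $\sum_k\|p_kD_L^*\xi\|_2^2=\|D_L^*\xi\|_2^2$. Your Step 1 therefore fills a gap that the paper leaves implicit. Your explicit description of the domain as $\{\xi:\sum_k(1+k)^2\|p_k\xi\|_2^2<\infty\}$ matches the paper's intended meaning.
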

\begin{proof}
    We will start by showing closedness. Let $\xi_n\in {\rm dom}(D_L)$ such that $\xi_n\to \xi$ and $D_L(\xi_n)\to\eta$. Then, for all $k\in\N$, one has $p_k(\xi_n)\to p_k(\xi)$, and applying Fatou's lemma, we see that 
    \begin{align*}
        \liminf_{n\to\infty}\sum_{k\in\N}(1+k)^2\|p_k(\xi_n)\|_2^2\geq \sum_{k\in\N}\lim_{n\to\infty}(1+k)^2\|p_k(\xi_n)\|_2^2=\sum_{k\in\N}(1+k)^2\|p_k(\xi)\|_2^2.
    \end{align*}
    However, we can apply the Parseval identity to identify the left-hand side as the limit inferior of
    $$
    \sum_{k\in\N}(1+k)^2\|p_k(\xi_n)\|^2_2=\Big\|\sum_{k\in\N}(1+k)p_k(\xi_n)\Big\|^2_2=\|D_L(\xi_n)\|^2_2.
    $$
    Since the sequence $D_L(\xi_n)\in L^{2}(A,\rho)$ is bounded, we conclude that $D_L(\xi)\in L^{2}(A,\rho)$; thus, $\xi\in {\rm dom}(D_L)$. A similar reasoning also shows that 
    \begin{align*}
        \|\eta-D_L(\xi)\|_2^2&=\sum_{k\in \N}\|p_k(\eta)-p_k(D_L(\xi))\|_2^2 \\
        &=\sum_{k\in \N}\|p_k(\eta)-(1+k)p_k(\xi)\|_2^2\\
        &=\sum_{k\in \N}\lim_{n\to \infty}\|p_k(\eta)-(1+k)p_k(\xi_n)\|_2^2 \\
        &\leq\liminf_{n\to \infty} \sum_{k\in \N}\|p_k(\eta)-p_k(D_L(\xi_n))\|_2^2 \\
        &\leq \liminf_{n\to \infty} \|\eta-D_L(\xi_n)\|_2^2=0.
    \end{align*}
    Hence $\eta=D_L(\xi)$ and we conclude that $D_L$ is closed. 

    To see that $D_{L}$ is self-adjoint, first note that $D_{L}$ is symmetric. So it suffices to show that if $\xi\in {\rm dom}(D_{L}^{*})$, then $\xi\in {\rm dom}(D_{L})$. Suppose $\xi\in {\rm dom}(D_{L}^{*})$, and choose $\eta\in {\rm dom}(D_{L})$. For $k\in\N$ and using that the domain of $D_{L}$ contains the image of $p_{k}$, we see that
    $$
    \langle \eta,p_k(D_L^*\xi)\rangle=\langle D_Lp_k(\eta),\xi\rangle=(1+k)\langle \eta,p_k(\xi)\rangle,
    $$
    hence $p_k(D_L^*\xi)=(1+k)p_k(\xi)$. This means that 
    $$
    \sum_{k\in\N}(1+k)^{2}\|p_{k}(\xi)\|^{2}=\sum_{k\in\N}\|p_{k}(D_L^*\xi)\|^{2}=\|D_L^*\xi\|^{2}<+\infty.
    $$
    Thus $\xi\in {\rm dom}(D_{L})$. \end{proof}

By Stone's theorem, $D_L$ generates a strongly continuous one-parameter group $U_t=e^{itD_L}\in \mathbb B(L^2(A,\rho)) $. For $t\in \R$ and $T\in  \mathbb B(L^2(A,\rho)) $, we set 
$$
\beta_t(T)=U_tTU_t^*
$$
and 
$$
\mathcal B_L=\{T\in \mathbb B(L^2(A,\rho)):t\mapsto\beta_t(T)\text{ is norm continuous}\}.
$$
Then $\mathcal B_L$ is a unital ${\rm C}^*$-subalgebra of $\mathbb B(L^2(A,\rho))$ that is invariant under the action of $\beta$ and such that $(\beta_t)_{t\in\mathbb R}$ restricts to a strongly continuous one-parameter group of $*$-automorphisms of $\mathcal B_L$.  Let
\begin{equation*}
 \partial(T)=\lim_{t\to0}\frac{\beta_t(T)-T}{t}
\end{equation*}
denote its infinitesimal generator. Then $\partial$ is a closed, densely defined $*$-derivation of $\mathcal B_L$ in the following sense.

\begin{defn}
    Let $B$ be a ${\rm C}^*$-algebra. A \emph{closed derivation} $\partial$ of $B$ is a linear map from a subalgebra $A\subset B$ into $B$ that satisfies \begin{enumerate}
    \item[(i)] $\partial(ab)=a\partial(b)+\partial(a)b$, for all $a,b\in A$.
    \item[(ii)] If $a_n\in A$, $a_n\to a$ and $\partial(a_n)\to b$, then $a\in A$ and $b=\partial(a)$.
\end{enumerate} If, in addition, $x\in A$ implies $x^*\in A$ and $\partial(x^*)=\partial(x)^*$, then $\partial$ is called a \emph{closed $*$-derivation}. 
\end{defn}

Now, the idea is to study the powers of the derivation $\partial$. In particular, we wish to describe $H_L^\infty(A,\rho)$ as the set of elements $a\in A$ that lie in the domain of all $\{\partial^n\}_{n\in \N}$. Indeed, we note that the domain of $\partial^n$ is naturally given by
$$
{\rm dom}(\partial^n)=\{T\in  \mathbb B(L^2(A,\rho)): \partial(T),\ldots, \partial^{n-1}(T)\in {\rm dom}(\partial)\}.
$$
For $T\in{\rm dom}(\partial^n)$, we use the graph norm
\begin{equation*}
 q_n(T)=\sum_{r=0}^n\frac{1}{r!}\|\partial^r(T)\|.
\end{equation*}
Note that when equipped with the graph norm $q_n$, every subspace ${\rm dom}(\partial^n)$ is a Banach $^*$-algebra with isometric involution (see \cite{KiSh94}). We also note that 
\begin{equation}\label{continuity}
    q_n(\partial T)\leq (n+1)q_{n+1}(T), \quad\text{ holds for all }n\in\N.
\end{equation}
We are, however, ultimately interested in the Fr\'echet space given by the intersection of all the spaces $\{{\rm dom}(\partial^n)\}_{n\in\N}$ with $A$. This space will be denoted by
$$
H^{{\rm der},\infty}_L(A,\rho)=A\cap \bigcap_{n\in\N} {\rm dom}(\partial^n).
$$
Soon, we will show that $H^{{\rm der},\infty}_L(A,\rho)$ agrees with $H^{\infty}_L(A,\rho)$ whenever $A$ enjoys the rapid decay property. The next lemma will show that $\bigcup_{n\in\N} L_n$ is contained in $\bigcap_{n\in\N} {\rm dom}(\partial^n)$, which guarantees that $\bigcap_{n\in\N}{\rm dom}(\partial^n)$ is dense in $A$.

\begin{lem}\label{iterated}
Let $(A,\rho)$ be a tracial ${\rm C}^*$-probability space, and let
$L=(L_n)_{n=0}^\infty$ be a filtration of $A$.  If $a\in L_m$, then
\begin{equation*}
 p_jap_k=0
 \qquad\text{whenever}\qquad |j-k|>m.
\end{equation*}
Consequently, $\bigcup_{n\in\mathbb N}{L_n} \subset\bigcap_{k\in\mathbb N}{\rm dom}(\partial^k)$ and $ \|\partial^r(a)\|\leq m^r\|a\|$, when $a\in L_m$ and $r\geq1$. Furthermore, if $a\in A\cap{\rm dom}(\partial^r)$, then $a\in{\rm dom}((D_L-1)^r)$ and
\begin{equation*}
 \partial^r(a)\hat 1=i^r(D_L-1)^r\hat a=i^r\sum_{k\in\mathbb N}k^rp_k(\hat a).
\end{equation*}
\end{lem}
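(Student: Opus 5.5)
The proof has three parts: the band structure of $a\in L_m$, the derivative bounds it implies, and the identification of $\partial^r(a)\hat 1$.

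\textbf{Band structure.} Since $L_mL_k\subseteq L_{m+k}$ and left multiplication by $a$ is bounded on $L^2(A,\rho)$, we get $a\,\overline{L_k}^{\|\cdot\|_2}\subseteq \overline{L_{k+m}}^{\|\cdot\|_2}$. Hence $q_{k+m}aq_k=aq_k$, and therefore $p_jap_k=0$ whenever $j>k+m$. Because $L_m$ is $*$-stable, the same argument applied to $a^*$ and then taking adjoints gives $p_jap_k=0$ whenever $k>j+m$.

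\textbf{Derivative bounds.} Note that $U_t=\sum_k e^{it(1+k)}p_k$. For finitely supported block matrices we then have
\[
\beta_t(a)=\sum_{|j-k|\le m}e^{it(j-k)}p_jap_k .
\]
It is convenient to write $\beta_t(a)=\sum_{d=-m}^m e^{itd}a_d$, where $a_d=\sum_k p_{k+d}ap_k$. Each $a_d$ is bounded with $\|a_d\|\le\|a\|$: it is a block-diagonal compression, and can also be written as an average of $e^{-isd}\beta_s(a)$ over $s\in[0,2\pi]$. Consequently $t\mapsto\beta_t(a)$ is a trigonometric polynomial in norm, so $a\in\mathcal B_L$ and $a$ lies in every ${\rm dom}(\partial^r)$, with
\[
\partial^r(a)=\sum_d (id)^r a_d .
\]

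The estimate $\|\partial^r(a)\|\le m^r\|a\|$ is the main obstacle, since summing the terms naively only gives $(2m+1)m^r\|a\|$. I would get the sharp constant from Bernstein's inequality for trigonometric polynomials of degree $m$ with values in a Banach space: $\|g'\|_\infty\le m\|g\|_\infty$. Apply it to $g(t)=\beta_t(a)$, which satisfies $\|g\|_\infty=\|a\|$ because $\beta_t$ is isometric. Since $\beta_t(a)\in{\rm span}\{p_jap_k : |j-k|\le m\}$ for every $t$, the same inequality applies to each derivative, and iterating gives $\|\partial^r(a)\|=\|g^{(r)}(0)\|\le m^r\|a\|$.

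\textbf{Identification of $\partial^r(a)\hat 1$.} Let $a\in A\cap{\rm dom}(\partial^r)$. Because $\hat 1$ spans $L_0$ and $p_0\hat 1=\hat 1$, we have $U_t^*\hat 1=e^{-it}\hat 1$. Thus
\[
\beta_t(a)\hat 1=e^{-it}U_t\hat a=e^{it(D_L-1)}\hat a .
\]
The left-hand side is differentiable in norm with derivative $\partial(a)\hat 1$ at $t=0$. By Stone's theorem, differentiability of $t\mapsto e^{it(D_L-1)}\hat a$ at $0$ means exactly that $\hat a\in{\rm dom}(D_L)$, with $i(D_L-1)\hat a=\partial(a)\hat 1$.

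To go to higher order, induct using $\beta_t(\partial^{s}(a))\hat 1=e^{it(D_L-1)}\partial^s(a)\hat 1$. This identity needs only $\partial^s(a)\in\mathbb B(L^2(A,\rho))$, not membership in $A$: it holds for any $T$ because $U_t^*\hat 1=e^{-it}\hat 1$. Combining the induction with the closedness of $D_L$ (Lemma~\ref{closedd}) gives $\hat a\in{\rm dom}((D_L-1)^r)$ and $\partial^r(a)\hat 1=i^r(D_L-1)^r\hat a$. Finally, the spectral decomposition $D_L-1=\sum_k k\,p_k$ yields the stated series formula $i^r\sum_{k\in\mathbb N}k^rp_k(\hat a)$.
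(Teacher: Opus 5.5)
Your proposal is correct and follows the paper's proof essentially step by step: the same band-structure argument via $aL_k\subseteq L_{k+m}$ and $*$-stability, the same decomposition $\beta_t(a)=\sum_{d=-m}^m e^{idt}a_d$ with the Banach-space-valued Bernstein inequality, and the same identity $\beta_t(a)\hat 1=e^{it(D_L-1)}\hat a$. Two justifications are left implicit in the paper and made explicit by you: the bound $\|a_d\|\leq\|a\|$, via averaging over a period of $U_t$, and the domain bookkeeping behind the paper's ``differentiating $r$ times'', via your Stone's-theorem induction on $\beta_t(\partial^s(a))\hat 1=e^{it(D_L-1)}\partial^s(a)\hat 1$.
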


\begin{proof}
Let $a\in L_m$.  If $j>k+m$, then
$$
 a\bigl(p_kH\bigr)\subset a\overline{L_k}^{\,\|\cdot\|_2}\subset\overline{L_{m+k}}^{\,\|\cdot\|_2},
$$
and hence $p_jap_k=0$. On the other hand, if $k>j+m$, then $(p_jap_k)^*=p_ka^*p_j=0$, because $L_m$ is stable under $*$. This proves the first part of the lemma.

Now, for $-m\leq d\leq m$, set
\[
 a_d=\sum_{k\geq\max\{0,-d\}}p_{k+d}ap_k,
\]
where the sum converges in the strong operator topology. The previous observation implies that $a=\sum_{d=-m}^ma_d$ and
\begin{equation*}
 \beta_t(a)=\sum_{d=-m}^me^{idt}a_d,
\end{equation*}
which is a $\mathbb B(L^2(A,\rho))$-valued trigonometric polynomial of degree at most $m$. In particular, it is smooth as a function of $t$, which implies that $a\in \bigcap_{k\in\mathbb N}{\rm dom}(\partial^k) $. Furthermore, by the definition of the infinitesimal generator, we have that $\partial^r(a)=\frac{d^r}{dt^r}\beta_t(a)\big|_{t=0}$. Now, we apply the Bernstein inequality for trigonometric polynomials to see that
$$
\|\partial^r(a)\|\leq \sup_{t\in\R}\Big\|\frac{d^r}{dt^r}\beta_t(a) \Big\|\leq m^r \sup_{t\in\R}\|\beta_t(a)\|=m^r\|a\|.
$$

In order to prove the last assertion, we now suppose that $a\in A\cap{\rm dom}(\partial^r)$. Since $D_L(\hat 1)=\hat 1$, we have
\begin{equation*}
 \beta_t(a)\hat 1=U_taU_t^*\hat 1=e^{-it}U_t\hat a=e^{it(D_L-1)}\hat a.
\end{equation*}
Differentiating $r$ times, we get that
$$
 \partial^r(a)\hat1=\frac{d^r}{dt^r}\beta_t(a)\big|_{t=0}\hat 1=i^r(D_L-1)^r\hat a.
$$
Finally, $D_L-1$ acts as multiplication by $k$ on $p_kL^2(A,\rho)$, which gives the last identity.
\end{proof}

By using this lemma, we are now able to conclude that $H^{{\rm der},\infty}_L(A,\rho)$ and $H^{\infty}_L(A,\rho)$ define the same Fr\'echet space.

\begin{thm}\label{frechet-eq}
    Let $(A,\rho)$ be a tracial ${\rm C}^*$-probability space, and let $L=(L_{n})_{n=0}^{\infty}$ be a filtration of $A$ that has the rapid decay property. Then $H^{{\rm der},\infty}_L(A,\rho)$ coincides with $H^{\infty}_L(A,\rho)$ as Fr\'echet spaces.
\end{thm}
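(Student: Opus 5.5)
We prove two continuous inclusions, $H^{{\rm der},\infty}_L(A,\rho)\subset H^{\infty}_L(A,\rho)$ and $H^{\infty}_L(A,\rho)\subset H^{{\rm der},\infty}_L(A,\rho)$. Each comes with seminorm estimates, so the identity map is a homeomorphism of Fréchet spaces.

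\emph{First inclusion.} Let $a\in A\cap{\rm dom}(\partial^r)$. The last assertion of Lemma \ref{iterated} gives $\partial^r(a)\hat 1=i^r\sum_k k^r p_k(\hat a)$. Since $\|\hat1\|_2=1$ (the state is unital), we get
$$
\Big(\sum_k k^{2r}\|p_k(\hat a)\|_2^2\Big)^{1/2}=\|\partial^r(a)\hat 1\|_2\le\|\partial^r(a)\|.
$$
Using $(k+1)^{2r}\le 2^{2r}(1+k^{2r})$ and $\|\hat a\|_2\le\|a\|$, this yields
$$
\|a\|_{H^r_L}\le 2^r\big(\|a\|+\|\partial^r(a)\|\big)\le 2^r r!\,q_r(a).
$$
This direction needs neither rapid decay nor the trace.

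\emph{Second inclusion.} Let $a\in H^\infty_L(A,\rho)$ and decompose $\hat a=\sum_k p_k(\hat a)$. The delicate point is that $p_k(\hat a)$ is only a vector in $\overline{L_k}^{\|\cdot\|_2}$, not a priori an element of $L_k$. Rapid decay handles this. For $b\in L_k$ we have $\|b\|\le C(k+1)^\alpha\|\hat b\|_2$, so the map $\hat b\mapsto b$ extends by continuity from $L_k$ to $\overline{L_k}^{\|\cdot\|_2}$, with values in the norm closure of $L_k$ inside $A$. By faithfulness, the vector $p_k(\hat a)$ is then represented by an element $a_k\in\overline{L_k}^{\|\cdot\|}\subset A$ with $\|a_k\|\le C(k+1)^\alpha\|p_k(\hat a)\|_2$.

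Next we need derivative bounds for $a_k$. The band structure $p_jbp_l=0$ for $|j-l|>k$ from Lemma \ref{iterated} persists under norm limits, so it holds for $a_k$ as well. The proof of Lemma \ref{iterated} then applies verbatim, and Bernstein's inequality gives $a_k\in\bigcap_r{\rm dom}(\partial^r)$ with $\|\partial^r(a_k)\|\le k^r\|a_k\|$. Hence
$$
\sum_k\|\partial^r(a_k)\|\le C\sum_k k^r(k+1)^\alpha\|p_k(\hat a)\|_2\le C\Big(\sum_k(k+1)^{-2}\Big)^{1/2}\|a\|_{H^{r+\alpha+1}_L}
$$
by Cauchy--Schwarz. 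So for every $r$ the series $\sum_k\partial^r(a_k)$ converges absolutely in norm.

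By closedness of $\partial$, applied inductively to the partial sums $s_N=\sum_{k\le N}a_k$, the norm limit $a'=\sum_k a_k$ lies in every ${\rm dom}(\partial^r)$, and
$$
q_r(a')\lesssim \|a\|_{H^{r+\alpha+1}_L}.
$$
Finally $a'=a$: both are elements of $A$, and their images in $L^2(A,\rho)$ agree, since $\widehat{s_N}=\sum_{k\le N}p_k(\hat a)\to\hat a$ in $\|\cdot\|_2$ while $s_N\to a'$ in norm. Faithfulness then identifies them.

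The main obstacle is this second inclusion, specifically justifying that each $p_k(\hat a)$ is realized by a bounded operator with the same band-limited Bernstein bound. I would handle it as above, extending rapid decay to $\overline{L_k}^{\|\cdot\|_2}$ and passing the band condition to limits. If the identification of left multiplication operators with $L^2$ vectors causes trouble, the fallback is to work directly with strong limits in $\mathbb B(L^2(A,\rho))$ and use that $A$ is norm closed.
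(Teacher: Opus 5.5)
Your proposal is correct and follows essentially the same route as the paper. One inclusion, $H^{{\rm der},\infty}_L(A,\rho)\subset H^\infty_L(A,\rho)$, comes from the identity $\partial^r(a)\hat 1=i^r\sum_k k^rp_k(\hat a)$ of Lemma \ref{iterated}; the other comes from decomposing $\hat a=\sum_k p_k(\hat a)$, bounding $\|\partial^r(p_k(a))\|$ via rapid decay and Bernstein's inequality, summing with Cauchy--Schwarz, and using closedness of $\partial$. The only difference is that you make self-contained what the paper cites or leaves implicit: you derive $p_k(\hat a)\in A$ directly from the rapid decay estimate rather than quoting \cite[Corollary 3.7]{HKER25}, you carry out the ``simple approximation argument'' by passing the band condition to norm limits, and you verify explicitly that the norm limit of the partial sums is $a$.
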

\begin{proof}
    Fix constants $\alpha>0$ and $C>1$ as in the definition of rapid decay. Set $c_0=\sum_{k\in\N}(1+k)^{-2}<+\infty$. Now, let $a\in H^{\infty}_L(A,\rho)$ and let $k\in \N$. Since $p_k(a)\in \overline{L_k}^{\,\|\cdot\|_2}$, by \cite[Corollary 3.7]{HKER25}, we actually have that $p_k(a)\in A$. Applying Lemma \ref{iterated} and a simple approximation argument, we get
    $$
    \|\partial^r(p_k(a))\|\leq C(1+k)^{r+\alpha}\|\widehat{p_k(a)}\|_2\qquad \text{for all } r\in\N.
    $$
    Consequently, 
    $$
    \sum_{k\in\mathbb N}\|\partial^r(p_k(a))\|\leq C\sum_{k\in\mathbb N}
     (k+1)^{r+\alpha}\|\widehat{p_k(a)}\|_2 \leq Cc_0\|\hat a\|_{H_L^{r+\alpha+1}}.
     $$ 
     This inequality implies that $\sum_{k\in \N}\partial^r(p_k(a)) $ converges in ${\rm dom}(\partial^r)$ and, appealing to linearity and closedness, we establish $\|\partial^r(a)\|\leq Cc_0\|\hat a\|_{H_L^{r+\alpha+1}}$. Finally, the first desired inequality follows from the fact that Sobolev norms increase with the index:
     \begin{align*}
         q_m(a)=\sum_{r=0}^m\frac{1}{r!}\|\partial^r(a)\|\leq c_0C\sum_{r=0}^m\frac{1}{r!}\|\hat a\|_{H_L^{r+\alpha+1}}\leq c_0Ce\|\hat a\|_{H_L^{m+\alpha+1}}.
     \end{align*}
On the other hand, if $a\in H^{{\rm der},\infty}_L(A,\rho)$, then 
    \begin{align*}
        \|\hat a-p_0(\hat a)\|_{H^{m}_L}&= \Big(\sum_{k=1}^\infty(1+k)^{2m}\|p_{k}(a)\|_2^2\Big)^{1/2} \\
        &\leq2^m\Big(\sum_{k\in\N}k^{2m}\|p_{k}(\hat a)\|_2^2\Big)^{1/2} \\
        &= 2^m\Big\|i^m\sum_{k\in\N}k^mp_{k}(\hat a)\Big\|_2 \\
        &=2^m\|\partial^m(a)\hat 1\|_2 \\
        &\leq 2^m\|\partial^m(a)\|.
    \end{align*}
    Hence $\|a\|_{H^{m}_L}\leq2^mm!q_m(a)$ and we conclude that $H^{\infty}_L(A,\rho)=H^{{\rm der},\infty}_L(A,\rho)$. The norm estimates also imply that $H^{{\rm der},\infty}_L(A,\rho)$ coincides with $H^{\infty}_L(A,\rho)$ as Fr\'echet spaces, as claimed.
\end{proof}

The theorem above, combined with a very classical result of Bratteli-Elliott-Jorgensen \cite[Lemma 3.2]{BaElJo84}, already proves that $H^\infty_L(A,\rho)$ is closed under the smooth functional calculus of $A$. In order to conclude, what remains is to show the norm bounds promised in the introduction. Our next lemma follows the work of Bratteli-Elliott-Jorgensen closely.

\begin{lem}[see {{\cite[Lemma 3.2]{BaElJo84}}}]\label{derivationsss}
Let $a=a^*$ belong to $\bigcap_{n\in\mathbb N}{\rm dom}(\partial^n)$. Then, for every $t\in\mathbb R$ we have
\begin{equation*}
 \partial(e^{ita})=i\int_0^t e^{isa}\partial(a)e^{i(t-s)a}\,ds.
\end{equation*}
More generally, for $r\geq1$,
\begin{align*}
 \partial^r(e^{ita})=i\sum_{j+k+l=r-1}\frac{(r-1)!}{j!k!l!}\int_0^t\partial^j(e^{isa})\partial^{k+1}(a)\partial^l(e^{i(t-s)a})\,ds.
\end{align*}
\end{lem}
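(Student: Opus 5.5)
We first establish the case $r=1$ by a Duhamel argument, and then obtain the general formula by induction on $r$ using the Leibniz rule for the closed $*$-derivation $\partial$.

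\textbf{The case $r=1$.} Since $\partial$ is the generator of the automorphism group $\beta$, we have $\beta_\tau(e^{ita})=e^{it\beta_\tau(a)}$. We then use the standard Duhamel identity
\[
e^{itx}-e^{ity}=i\int_0^t e^{isx}(x-y)e^{i(t-s)y}\,ds,
\]
which holds for bounded $x,y$ and is checked by differentiating $s\mapsto e^{isx}e^{i(t-s)y}$ in $s$. Taking $x=\beta_\tau(a)$ and $y=a$ gives
\[
\frac{\beta_\tau(e^{ita})-e^{ita}}{\tau}=i\int_0^t e^{is\beta_\tau(a)}\,\frac{\beta_\tau(a)-a}{\tau}\,e^{i(t-s)a}\,ds.
\]
As $\tau\to0$ the difference quotient $(\beta_\tau(a)-a)/\tau$ converges in norm to $\partial(a)$. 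Moreover, $e^{is\beta_\tau(a)}\to e^{isa}$ in norm, uniformly for $s\in[0,t]$, because $\|\beta_\tau(a)-a\|\to0$ and $\|e^{isx}-e^{isy}\|\le |s|\,\|x-y\|$ for self-adjoint $x,y$. Hence the integral converges in norm, so $e^{ita}\in{\rm dom}(\partial)$ and the first formula holds. A remark on self-adjointness: $\beta_\tau$ is implemented by unitaries, so $\beta_\tau(a)$ is self-adjoint and its exponentials are unitary. This gives a uniform bound on all the factors above.

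\textbf{Induction on $r$.} Assume the formula holds for $r$. The integrand is a product of three factors. The middle factor $\partial^{k+1}(a)$ lies in every ${\rm dom}(\partial^n)$ by hypothesis. The outer factors $\partial^j(e^{isa})$ and $\partial^l(e^{i(t-s)a})$ lie in ${\rm dom}(\partial)$ by the induction hypothesis applied at lower orders; here we use that the formula for order $m$ exhibits $\partial^{m}(e^{ita})$ as an integral of products of elements in the domain. Applying $\partial$ to each integrand with the Leibniz rule produces three sums, with $(j,k,l)$ replaced by $(j+1,k,l)$, $(j,k+1,l)$ and $(j,k,l+1)$. The multinomial identity
\[
\frac{(r-1)!}{(j-1)!k!l!}+\frac{(r-1)!}{j!(k-1)!l!}+\frac{(r-1)!}{j!k!(l-1)!}=\frac{r!}{j!k!l!},\qquad j+k+l=r,
\]
recombines the coefficients into the formula for $r+1$. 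Note that the middle term from the Leibniz rule is $\partial^{k+2}(a)$, which matches the reindexing $k\mapsto k+1$ in $\partial^{(k)+1}(a)$.

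\textbf{Main obstacle.} The delicate step is justifying that $\partial$ can be passed under the integral $\int_0^t\cdots\,ds$. We handle this with closedness of $\partial$.
\begin{enumerate}
\item Show that the integrand and its $\partial$-image are norm continuous in $s$. This is done simultaneously within the induction, using the $r=1$ formula to see that $s\mapsto e^{isa}$ is continuous into each ${\rm dom}(\partial^n)$ with the graph norm $q_n$.
\item Approximate the integral by Riemann sums. Each Riemann sum lies in ${\rm dom}(\partial)$, and $\partial$ of the sums converges to the integral of $\partial$ of the integrand.
\item Conclude, by closedness of $\partial$, that the integral lies in ${\rm dom}(\partial)$ and that $\partial$ commutes with the integral.
\end{enumerate}
Thus the induction simultaneously carries the statement that $s\mapsto\partial^j(e^{isa})$ is norm continuous, with norm bounded on compacts, for every $j$. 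This is what allows the Riemann-sum argument to go through at each stage.
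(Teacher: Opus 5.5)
Your proof is correct. It differs from the paper's mainly in how the case $r=1$ is obtained.

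\textbf{Base case.} The paper follows Bratteli--Elliott--Jorgensen. It expands $e^{ita}$ as a power series, which converges in each Banach algebra $({\rm dom}(\partial^k),q_k)$, and applies $\partial$ term by term via $\partial(a^m)=\sum_u a^u\partial(a)a^{m-1-u}$. It then matches the resulting series with the integral using the Beta identity $\int_0^t s^p(t-s)^q\,ds=\frac{p!q!}{(p+q+1)!}t^{p+q+1}$. You instead use that $\partial$ is the generator of $\beta$, and pass to the limit in the exact Duhamel identity for $(\beta_\tau(e^{ita})-e^{ita})/\tau$. This avoids all series manipulation and needs only $a\in{\rm dom}(\partial)$. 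However, it relies on $\partial$ being a generator, whereas the power-series argument works for an arbitrary closed derivation on a Banach algebra.

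\textbf{Higher $r$.} The paper applies $\partial^{r-1}$ once under the integral and invokes the trinomial Leibniz rule. This is justified because $s\mapsto e^{isa}$ is continuous into $({\rm dom}(\partial^{r-1}),q_{r-1})$, which is immediate from the power series in that Banach algebra. Your one-derivative-at-a-time induction with the Pascal-type recursion is essentially the inductive proof of that trinomial rule. Your Riemann-sum and closedness argument is a hands-on substitute for the fact that the bounded map $\partial^{r-1}\colon({\rm dom}(\partial^{r-1}),q_{r-1})\to\mathcal B_L$ commutes with the integral. The cost is that you must carry norm continuity of $s\mapsto\partial^j(e^{isa})$ through the induction, which you do correctly. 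The paper gets this for free from the Banach-algebra structure of ${\rm dom}(\partial^n)$.
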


\begin{proof}
Since $\partial$ is a derivation, we have 
$$
\partial(a^m)=\sum_{u=0}^{m-1}a^u\partial(a)a^{m-1-u}, \qquad\text{ for } m\geq1 \text{ and }a\in{\rm dom}(\partial).
$$
Now, because of \eqref{continuity}, the operator $\partial:{\rm dom}(\partial^k)\to{\rm dom}(\partial^{k-1})$ is continuous, and we can apply it term by term to the power series defining $e^{ita}$. It follows that
\begin{equation*}
 \partial(e^{ita})=\sum_{m=1}^\infty\frac{(it)^m}{m!}\sum_{u=0}^{m-1}a^u\partial(a)a^{m-1-u},
 \end{equation*}
where the series converges in ${\rm dom}(\partial^{r-1})$. Furthermore, using the following elementary identity
\begin{equation*}
 \int_0^t s^p(t-s)^q\,ds=\frac{p!q!}{(p+q+1)!}t^{p+q+1}, \quad \text{ for }p,q\geq0, t\in\R,
\end{equation*}
we derive the formula
\begin{align*}
 i\int_0^t e^{isa}\partial(a)e^{i(t-s)a}\,ds&=i\sum_{p,q\geq0}\frac{i^{p+q}}{p!q!}\left(\int_0^t s^p(t-s)^q\,ds\right)a^p\partial(a)a^q\\
 &=\sum_{m=1}^\infty\frac{(it)^m}{m!}\sum_{p+q=m-1}a^p\partial(a)a^q.
\end{align*}
Again, the formula is valid in every subalgebra of the form ${\rm dom}(\partial^{k-1})$.

Finally, for every $r\geq1$,  the map
\[
 s\longmapsto e^{isa}\partial(a)e^{i(t-s)a}
\]
is continuous with respect to $q_{r-1}$, so we may apply $\partial^{r-1}$ under the integral sign. Indeed, applying the Leibniz identity twice gives the following identity
\[
 \partial^{r-1}(xyz)
 =\sum_{j+k+l=r-1}\frac{(r-1)!}{j!k!l!}
   \partial^j(x)\partial^k(y)\partial^l(z).
\]
Taking $x=e^{isa}$, $y=\partial(a)$, and $z=e^{i(t-s)a}$ finishes our proof.
\end{proof}

The next lemma shows that every self-adjoint element in $\bigcap_{r\in\mathbb N}{\rm dom}(\partial^r)$ grows polynomially.

\begin{lem}\label{polgrowth}
For every $n\in\mathbb N$, there is a constant $E_n>0$, depending only on $n$, such that
\begin{equation*}
 q_n(e^{ita})
 \leq E_n(1+q_n(a))^n(1+|t|)^n
\end{equation*}
for every self-adjoint $a\in\bigcap_{r\in\mathbb N}{\rm dom}(\partial^r)$ and every $t\in\mathbb R$.
\end{lem}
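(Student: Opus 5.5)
We argue by induction on $n$, using the iterated formula of Lemma \ref{derivationsss}. The quantity to control is $M_r(t)=\|\partial^r(e^{ita})\|$ for $0\le r\le n$; since $q_n(e^{ita})=\sum_{r\le n}M_r(t)/r!$, it suffices to show that
\[
M_r(t)\le F_r(1+q_r(a))^r(1+|t|)^r
\]
for constants $F_r$ depending only on $r$. The base case $r=0$ is immediate: $e^{ita}$ is unitary because $a=a^*$, so $M_0(t)=1$.

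For the inductive step, suppose the bound holds for all orders $<r$ and all real times. In the formula of Lemma \ref{derivationsss}, the indices satisfy $j,l\le r-1$, and the middle factor obeys $\|\partial^{k+1}(a)\|\le (k+1)!\,q_r(a)$ because $k+1\le r$. Estimating the integrand in norm, and using $|s|,|t-s|\le|t|$ for $s$ between $0$ and $t$, gives
\[
M_r(t)\le\sum_{j+k+l=r-1}\frac{(r-1)!}{j!k!l!}(k+1)!\,q_r(a)\,|t|\,F_jF_l(1+q_j(a))^j(1+q_l(a))^l(1+|t|)^{j+l}.
\]
The graph norms increase with the index, so $q_j(a),q_l(a)\le q_r(a)$. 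Hence each term is bounded by a constant times $(1+q_r(a))^{1+j+l}(1+|t|)^{1+j+l}$. Since $j+l\le r-1$ and both bases are at least $1$, this is at most $(1+q_r(a))^r(1+|t|)^r$, and summing the finitely many terms produces $F_r$. Finally, $q_r(a)\le q_n(a)$ for $r\le n$, so summing over $r\le n$ yields $E_n=\sum_{r\le n}F_r/r!$.

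The main obstacle is bookkeeping rather than substance. The exponents must not exceed $n$, and this holds because exactly one factor $\partial^{k+1}(a)$ and one factor $|t|$ enter at each step, while the two exponential factors carry total degree $j+l\le r-1$. It is important to bound $\|\partial^{k+1}(a)\|$ directly by $q_r(a)$, and not by a product of lower-order norms, so that only one power of $q$ is gained. Also, $k+1\le r$ requires only $a\in{\rm dom}(\partial^r)$, which is consistent with the hypothesis. The remaining technical point, passing the norm inside the integral, is justified by the $q_{r-1}$-continuity of the integrand established in the proof of Lemma \ref{derivationsss}.
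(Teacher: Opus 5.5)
Your proof is correct and follows the paper's argument: induction on the order of the derivative via the iterated formula of Lemma \ref{derivationsss}, bounding the middle factor $\partial^{k+1}(a)$ only once, and using $|s|,|t-s|\le|t|$ together with $j+l+1\le r$. The only difference is bookkeeping. The paper fixes $M=1+\max_{1\le r\le n}\|\partial^r(a)\|$ for all orders and converts to $q_n(a)$ at the end via $M\le n!(1+q_n(a))$, whereas you carry $q_r(a)$ at each stage and use monotonicity of the graph norms.
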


\begin{proof}
The assertion is immediate for $n=0$.  Fix $n\geq1$ and set
\[
 M=1+\max_{1\leq r\leq n}\|\partial^r(a)\|.
\]
We claim that, for $0\leq r\leq n$, there are constants $c_r>0$ depending only on $r$ such that
\begin{equation}\label{induction}
 \|\partial^r(e^{ita})\|\leq c_rM^r(1+|t|)^r.
\end{equation}
For $r=0$, this follows from $\|e^{ita}\|=1$.  Suppose the assertion has been proved below order $r$. Lemma \ref{derivationsss} provides us with the formula
$$
\partial^r(e^{ita})=i\sum_{j+k+l=r-1}\frac{(r-1)!}{j!k!l!}\int_0^t\partial^j(e^{isa})\partial^{k+1}(a)\partial^l(e^{i(t-s)a})\,ds.
$$
If $s$ lies between $0$ and $t$, then $|s|,|t-s|\leq|t|$. The induction hypothesis therefore bounds the integrand by
\[
 c_jc_lM^{j+l+1}(1+|t|)^{j+l}.
\]
After integration, this is at most
\[
 c_jc_lM^r(1+|t|)^r,
\]
because $j+l+1=r-k\leq r$.  Summing the finitely many multinomial coefficients proves \eqref{induction}.

Finally,
\[
 M\leq1+n!q_n(a)\leq n!(1+q_n(a)),
\]
and summing \eqref{induction} over $0\leq r\leq n$ proves the required estimate.
\end{proof}

In order to state our main theorem, let us fix the following convention for the Fourier transform of a real function
\[
 \widehat f(t)=\int_{\mathbb R}f(s)e^{-its}\,ds.
\]

\begin{thm}\label{actual}
    Let $(A,\rho)$ be a tracial ${\rm C}^*$-probability space, and let
$L=(L_n)_{n=0}^\infty$ be a filtration of $A$ with the rapid decay property.
Then $H_L^\infty(A,\rho)$ is closed under the smooth functional calculus of
$A$.

More precisely, fix rapid decay constants $C,\alpha>0$.  For every
$n\in\mathbb N$, there is a constant $K_n>0$ such that, whenever
$a=a^*\in H_L^\infty(A,\rho)$ and $f\in C_c^\infty(\mathbb R)$,
\begin{equation*}
 f(a)=\frac{1}{2\pi}\int_{\mathbb R}\widehat f(t)e^{ita}\,dt
\end{equation*}
converges in every subspace $H_L^n(A,\rho)$, and we have the norm estimate
\begin{equation*}
 \|f(a)\|_{H_L^n}\leq K_n\bigl(1+\|a\|_{H_L^{\alpha+n+1}}\bigr)^n \int_{\mathbb R}|\widehat f(t)|(1+|t|)^n\,dt.
\end{equation*}
\end{thm}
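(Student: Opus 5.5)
The plan is to run the whole argument in the derivation picture, where Lemma \ref{polgrowth} supplies the needed growth bound, and to return to Sobolev norms only at the very end using the two-sided estimates from the proof of Theorem \ref{frechet-eq}. From that proof we have, for $a\in H_L^\infty(A,\rho)=H^{{\rm der},\infty}_L(A,\rho)$,
\[
 q_m(a)\leq c_0Ce\|\hat a\|_{H_L^{m+\alpha+1}},\qquad \|\hat a\|_{H_L^{m}}\leq 2^m m!\,q_m(a)+\|p_0(\hat a)\|_2 .
\]
The constant term can be absorbed, since $\|p_0(\hat a)\|_2\leq\|a\|\leq q_m(a)$. Also, the inequality $\|\partial^m(a)\hat1\|_2\leq\|\partial^m(a)\|$ used there holds for every $T\in A\cap{\rm dom}(\partial^m)$. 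Hence $\|T\|_{H^m_L}\leq c_m q_m(T)$ for all such $T$, not only for elements already known to lie in $H^\infty_L$.

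First I would fix a self-adjoint $a\in H^\infty_L(A,\rho)$ and $f\in C_c^\infty(\mathbb R)$. The function $\widehat f$ is Schwartz, so $\int|\widehat f(t)|(1+|t|)^n\,dt<\infty$ for every $n$. The map $t\mapsto e^{ita}$ is continuous into each Banach algebra $({\rm dom}(\partial^n),q_n)$: the power series converges in $q_n$, and Lemma \ref{derivationsss} gives continuity of the derivatives in $t$. By Lemma \ref{polgrowth},
\[
 q_n(e^{ita})\leq E_n(1+q_n(a))^n(1+|t|)^n .
\]
Therefore the Bochner integral $\frac1{2\pi}\int\widehat f(t)e^{ita}\,dt$ converges absolutely in $({\rm dom}(\partial^n),q_n)$, and
\[
 q_n\Big(\tfrac1{2\pi}\textstyle\int\widehat f(t)e^{ita}\,dt\Big)\leq \tfrac{E_n}{2\pi}(1+q_n(a))^n\int|\widehat f(t)|(1+|t|)^n\,dt .
\]

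Next I would identify this integral with $f(a)$. Since $q_n$ dominates the operator norm, the integral also converges in $A$, where it equals $f(a)$ by Fourier inversion together with the continuous functional calculus; for instance, one can check this on spectral measures, or approximate by Riemann sums. The same element thus lies in $A\cap\bigcap_n{\rm dom}(\partial^n)$, which equals $H^\infty_L$ by Theorem \ref{frechet-eq}. This gives closure under the smooth functional calculus.

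To get convergence in $H^n_L$ and the stated estimate, I would combine $\|\cdot\|_{H^n_L}\leq c_n q_n$ on $A\cap{\rm dom}(\partial^n)$ with the $q_n$-convergence above. The resulting bound is
\[
 \|f(a)\|_{H^n_L}\leq c_n\tfrac{E_n}{2\pi}\bigl(1+q_n(a)\bigr)^n\int|\widehat f|(1+|t|)^n\,dt .
\]
Using $q_n(a)\leq c_0Ce\|a\|_{H_L^{n+\alpha+1}}$, we get $1+q_n(a)\leq \max(1,c_0Ce)\bigl(1+\|a\|_{H_L^{\alpha+n+1}}\bigr)$. Setting $K_n=c_nE_n\max(1,c_0Ce)^n/2\pi$ then yields the claimed inequality.

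The main obstacle is measure-theoretic rather than analytic: I need to justify that $t\mapsto e^{ita}$ is strongly measurable, in fact continuous, in $q_n$, and that the Bochner integral taken in ${\rm dom}(\partial^n)$ agrees with the norm-convergent integral in $A$. For the first point I would use the series expansion, which converges locally uniformly in $t$ for $q_n$ since $q_n$ is a Banach algebra norm. For the second, the inclusion ${\rm dom}(\partial^n)\to\mathbb B(L^2)$ is contractive, so the two integrals coincide.
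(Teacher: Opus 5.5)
Your proposal is correct and follows essentially the same route as the paper. You get $q_n$-convergence of $\frac{1}{2\pi}\int\widehat f(t)e^{ita}\,dt$ and the $q_n$-bound from Lemma~\ref{polgrowth}, then pass back to $\|\cdot\|_{H^n_L}$ using the two comparison inequalities from the proof of Theorem~\ref{frechet-eq}; your extra bookkeeping (absorbing the $p_0$-term, noting that $\|T\|_{H^n_L}\leq c_n q_n(T)$ holds on all of $A\cap{\rm dom}(\partial^n)$, and justifying $q_n$-continuity of $t\mapsto e^{ita}$ and agreement of the two integrals) fills in details the paper leaves to standard techniques.
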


\begin{proof}
Let $a=a^*\in H_L^\infty(A,\rho)$. By Theorem \ref{frechet-eq} and its proof, it is enough to show that $f(a)=\frac{1}{2\pi}\int_{\mathbb R}\widehat f(t)e^{ita}\,dt$ converges in each subspace of the form ${\rm dom}(\partial^n)$. But this follows from the polynomial growth estimate in Lemma \ref{polgrowth} and standard techniques \cite[Theorem 2.5]{Fl25a}.

The explicit estimate can be obtained directly: By Lemma \ref{polgrowth}, 
$$
q_n(f(a))\leq\frac{E_n}{2\pi}(1+q_n(a))^n\int_{\mathbb R}|\widehat f(t)|(1+|t|)^n\,dt.
$$
But the proof of Theorem \ref{frechet-eq} also shows that $\|f(a)\|_{H_L^n}\leq2^nn!q_n(f(a))$ and $q_n(a)\leq K\|a\|_{H_L^{n+\alpha+1}}$. The theorem is proved.
\end{proof}

\section*{AI statement}

GPT-5.6 Sol helped fix a mistake in an earlier version of Lemma \ref{iterated}. It also suggested a preliminary form of Lemma \ref{derivationsss}, which was ultimately traced back to the work of Bratteli-Elliott-Jorgensen by the author. The rest of the mathematical content, including all of the ideas behind the conception of this article, is due to the author. Every word in this document has been written by its author.

\section*{Acknowledgments}

The author gratefully acknowledges support from the Simons Foundation Dissertation Fellowship SFI-MPS-SDF-00015100. He also thanks Ben Hayes for initial discussions, many comments, and his guidance.

\printbibliography

\bigskip
\bigskip
ADDRESS

\smallskip
\smallskip
Felipe Flores

Department of Mathematics, University of Virginia,

114 Kerchof Hall. 141 Cabell Dr,

Charlottesville, Virginia, United States

E-mail: hmy3tf@virginia.edu

\end{document}